\newcommand{\hatkappal}{{\hat{\mathbf{K}}_\alpha}}
\newcommand{\kappal}{\mathbf{K}_{\alpha}}
\newcommand{\hilbert}{{\mathbb{H}}}
\newcommand{\hs}{\mathcal{S}_2}
\newtheorem{theorem}{Theorem}
\newtheorem{condition}{Condition}
\newtheorem{example}{Example}
\newtheorem{proposition}{Proposition}
\newtheorem{remark}{Remark}
\begin{document}

\begin{frontmatter}

\title{On the rate of convergence for the autocorrelation operator in functional autoregression\thanksref{T1}}
\runtitle{Autocorrelation operator in functional autoregression}
\thankstext{T1}{To appear in \emph{Statistics \& Probability Letters}}

\begin{aug}
\author{\fnms{Alessia} \snm{Caponera}\ead[label=e1]{alessia.caponera@epfl.ch}}
\and
\author{\fnms{Victor M.} \snm{Panaretos}\ead[label=e4]{victor.panaretos@epfl.ch}}

\runauthor{Caponera \& Panaretos}

\affiliation{\'Ecole Polytechnique F\'ed\'erale de Lausanne}

\address{\'Ecole Polytechnique F\'ed\'erale de Lausanne\\ \printead{e1,e4} }

\end{aug}

\begin{abstract} 
We consider the problem of estimating the autocorrelation operator of an autoregressive Hilbertian process. By means of a Tikhonov approach, we establish a general result that yields the convergence rate of the estimated autocorrelation operator as a function of the rate of convergence of the estimated lag zero and lag one autocovariance operators. The result is general in that it can accommodate any consistent estimators of the lagged autocovariances. Consequently it can be applied to processes under any mode of observation: complete, discrete, sparse, and/or with measurement errors. An appealing feature is that the result does not require delicate spectral decay assumptions on the autocovariances but instead rests on natural source conditions. The result is illustrated by application to important special cases. \end{abstract}

\bigskip
\begin{keyword}[class=AMS]
\kwd[Primary ]{62G08}
\kwd[; secondary ]{62M}
\end{keyword}

\begin{keyword}
\kwd{functional time series}
\kwd{source condition}
\kwd{Tikhonov regularization}
\end{keyword}

\end{frontmatter}

\setcounter{tocdepth}{1}
\tableofcontents

\section{Introduction}

Let $\hilbert$ be a real and separable Hilbert space with norm $\|\cdot\|_\hilbert$ and inner product $\langle \cdot, \cdot \rangle_\hilbert$. Let $\mathcal{L}(\hilbert)$ be the set of all bounded linear operators from $\hilbert$ to $\hilbert$, and $\|\cdot\|_\infty$ be the corresponding operator norm.

In this paper, we consider a collection of $\hilbert$-valued zero-mean random elements $\mathcal{X}=\{X_t, t \in \mathbb{Z}\}$, constituting the unique stationary solution of the autoregressive equation
\begin{equation*}
X_t = \rho(X_{t-1})  + \varepsilon_t,
\end{equation*}
with $\rho\in \mathcal{L}(\hilbert)$ such that $\sum_{j=0}^\infty \|\rho^j\|_\infty^2 < \infty$, and $\varepsilon=\{\varepsilon_t, t \in \mathbb{Z}\}$ a \emph{strong white noise} on $\hilbert$ -- see \cite[Definition 3.1]{bosq}. The collection $\mathcal{X}$ is called an autoregressive Hilbertian process of order one, written ARH(1). It plays a fundamental role in the field of functional time series. An extensive review of ARH(1) can be found in \cite{bosq}; while for a general discussion on central topics in functional data analysis see the surveys \cite{CUEVAS2014, WANG2016, GOIA2016, ANEIROS2019, ANEIROS2022}.


In this setting, a key role is played by the covariance and cross-crovariance operators (also known as lag 0 and lag 1 autocovariances), namely,
$$
\mathscr{R}_0 = \mathbb{E}[X_0 \otimes X_0], \qquad \mathscr{R}_1 = \mathbb{E}[X_1 \otimes X_0],
$$
where the tensor product $u \otimes v$, with $u,v \in \hilbert$, is defined to be the mapping that takes any element $f \in \hilbert$ to $u \langle f, v\rangle \in \hilbert.$ 
Note that $\mathscr{R}_0$ is a nonnegative definite, self-adjoint, trace-class operator and that the adjoint of $\mathscr{R}_1$ is given by $\mathscr{R}_1^*=\mathscr{R}_{-1}= \mathbb{E}[X_0 \otimes X_1]$. The two operators $(\mathscr{R}_0, \mathscr{R}_1)$ are inextricably linked with the operator $\rho$ through the  equations 
\begin{equation}\label{inverse-problem}
\mathscr{R}_1 = \rho \mathscr{R}_0, \qquad \mathscr{R}_{-1} =\mathscr{R}_0 \rho^*,
\end{equation}
which explain why the operator $\rho$ (or indeed its adjoint $\rho^*$) can be interpreted as the \emph{autocorrelation operator}.
In order to ensure identifiability, we shall assume that $\mathscr{R}_0$ is positive definite, i.e., $\operatorname{Ker}(\mathscr{R}_0)=\{0\}$ -- see for instance \cite{mas2007}. Otherwise, without loss of generality, one can take $\hilbert=\overline{\operatorname{Im}(\mathscr{R}_0)}$.


Estimating $\rho$ is fundamental to inference and prediction, but constitutes a subtle problem.
For instance, likelihood approaches are intractable in a truly infinite-dimensional framework. And, while  $\mathscr{R}_0$ and $\mathscr{R}_1$ are amenable to a plethora of estimation methods,  
plugging their estimators in \eqref{inverse-problem} and solving for $\rho$ must be done  carefully: this operation requires the definition of a pseudo-inverse operator, which is an unbounded operator, and whose domain is a strict subset of $\hilbert$ -- an ill-posed inverse problem.

For this reason, any  estimation procedure necessarily goes through the idea of regularizing an estimate of $\mathscr{R}_0$ to then obtain an approximation of its pseudo-inverse. 
The classical estimator based on a finite stretch $\{X_1,...,X_n\}$ of fully observed mean-zero elements was introduced by \cite{bosq91} and is a projection estimator. As the name suggests, it is constructed by first projecting $\{X_1,...,X_n\}$ onto the span $\hilbert_{k_n}$ of the first $k_n$ eigenvectors of the empirical covariance operator $$\hat{\mathscr{R}}_0 = \frac1n \sum_{i=1}^n X_t \otimes X_t.
$$ 
This yields a finite-dimensional process on $\hilbert_{k_n}$, and allows Equation \eqref{inverse-problem} to be restricted to a well-posed equation in $\hilbert_{k_n}$. The resulting estimator can be interpreted on all of $\hilbert$ as a finite rank operator with range $\hilbert_{k_n}$. Allowing $k_n$ to diverge at an appropriate rate (related to the spectral decay of $\mathscr{R}_0$) yields almost sure consistency \cite{bosq}, or asymptotic normality \cite{mas99}. \cite{guillas2001} obtained rates of convergence in the $L^2$-sense for a slightly modified estimator, both when the $\mathscr{R}_0$-eigenvectors are known and when they are estimated. Asymptotic properties for predictions are investigated in \cite{mas2007,antoniadis2003} and an application is given in \cite{carre2020mas}. {Results on consistent estimation and prediction for Banach-valued autoregressive processes are derived in \cite{ruiz2019}.}

Our purpose in this paper is to study \emph{general} rates of convergence in the sense that we do not restrict our argument to specific estimators $\hat{\mathscr{R}}_0$ and $\hat{\mathscr{R}}_1$ of $\mathscr{R}_0$ and $\mathscr{R}_1$, respectively. This makes the results extremely general and applicable to many different situations. Examples of its application are provided both for \emph{continuously/densely} sampled functional data and \emph{sparsely/noisily} sampled functional data. As a matter of fact, these are the first rates ever established in the latter regime. To do so, we regularize Equation \eqref{inverse-problem} by means of a Tikhonov approach, instead of a spectral truncation approach (Section \ref{sec:results}). In particular, we also avoid awkward spectral decay conditions, and instead focus on arguably more natural \emph{source conditions}.

\paragraph{Additional notation}  We denote the Moore-Penrose generalized inverse of an operator $\mathscr{T} \in \mathcal{L}(\hilbert)$  by $\mathscr{T}^\dag$ (see \cite[Definition 3.5.7]{Hsing}).
For $1\le p <\infty$, we let $\|\cdot\|_p$ denote the $p$-Schatten norm, defined as
$$\|\mathscr{T}\|_p = \left ( \sum_{j=1}^\infty \langle e_j, (\mathscr{T}^* \!\mathscr{T} )^{p/2} e_j \rangle_\hilbert\right )^{1/p}, \quad \mathscr{T} \text{ compact} \in \mathcal{L}(\hilbert),$$
for any complete orthonormal system (CONS) $\{e_j, j \in \mathbb{N}\}$.
Clearly, $\|\cdot\|_2, \|\cdot\|_1$ are the Hilbert-Schmidt and trace norms.
Moreover, we let $\mathcal{S}_p=\mathcal{S}_p(\hilbert)$ be the set of all compact operators $\mathscr{T} \in \mathcal{L}(\hilbert)$ such that $\|\mathscr{T}\|_p <\infty$ (this definition can also be extended to $0< p < 1$).  Recall that $\hs$ endowed with the Hilbert-Schmidt inner product is a Hilbert space.

\section{Results}\label{sec:results}

Without loss of generality, we will focus on the equation $\mathscr{R}_{-1} = \mathscr{R}_0 \rho^*$. We assume that $\rho \in \hs$ and $\mathscr{R}_0$ positive definite. 
We also assume to have consistent estimators $\hat{\mathscr{R}}_1$ and $\hat{\mathscr{R}}_0$ of $\mathscr{R}_1$ and $\mathscr{R}_0$, respectively. These can be arbitrary, as long as $\hat{\mathscr{R}}_{1}$ is in $\hs$ and $\hat{\mathscr{R}}_0$ is in $\mathcal{S}_p$, for some $1 \le p\le \infty$.
We take $\hat{\mathscr{R}}_{-1}= \hat{\mathscr{R}}^*_1$. In particular, we assume $\hat{\mathscr{R}}_0$ to be self-adjoint, but not necessarily nonnegative definite. 

Now, consider the set of all linear and bounded operators from $\hs$ to $\hs$, say $\mathcal{L}(\hs)$.
The operator composition $J:\hs \to \hs$,
$\Phi \stackrel{J}{\mapsto} \mathscr{R}_0 \Phi$ is linear and bounded, so belongs to $\mathcal{L}(\hs)$. The same holds for $\hat{J}:\hs \to \hs$, $\Phi \stackrel{\hat{J}}{\mapsto} \hat{\mathscr{R}}_0 \Phi$.

We define our estimator of $\rho^*$ as the solution of the following regularization problem
\begin{equation}\label{min:stochastic}
\min_{\Phi \in \hs} \| \hat{\mathscr{R}}_{-1} - \hat{\mathscr{R}}_0 \Phi \|^2_{2} + \alpha \|\Phi\|^2_{2}.
\end{equation}
By \cite[Theorem 6.2.1]{Hsing} or \cite[Proposition 7.3]{hanke2017}, for any $\alpha >0$, there exists a unique minimizer $\hat{\Phi}_\alpha \in \hs$ which can be expressed in terms of the operator $\hat{J}$. However, we can give a more explicit solution of \eqref{min:stochastic} which directly involves the use of $\hat{\mathscr{R}}_0$, that is,
$
\hat{\Phi}_{\alpha} = \hatkappal \hat{\mathscr{R}}_{-1},
$
with
$
\hatkappal = (\hat{\mathscr{R}}_0\hat{\mathscr{R}}_0 + \alpha I)^{-1} \hat{\mathscr{R}}_0,
$ and $I$ the identity operator on $\hilbert$. This can be deduced by observing that, given any CONS $\{e_j, j \in \mathbb{N}\}$ on $\hilbert$ and $\Phi \in \hs$,
\begin{equation*}
\| \hat{\mathscr{R}}_{-1} - \hat{\mathscr{R}}_0 \Phi \|^2_{2} + \alpha \|\Phi\|^2_{2} =\sum_{j=1}^\infty \left (\|(\hat{\mathscr{R}}_{-1} - \hat{\mathscr{R}}_0 \Phi)e_j\|^2_\hilbert + \alpha \|\Phi e_j\|^2_\hilbert \right)
\end{equation*}
and the $j$-th term in the sum is minimized at $\hat{\Phi}_\alpha e_j$, again by an application of \cite[Theorem 6.2.1]{Hsing}.
Similarly, $\Phi_{\alpha}=\kappal \mathscr{R}_{-1}$, with
$
\kappal = (\mathscr{R}_0\mathscr{R}_0 + \alpha I)^{-1} \mathscr{R}_0 
$, is the unique minimizer of
\begin{equation}\label{min:deterministic}
\min_{\Phi \in \hs} \| \mathscr{R}_{-1} - \mathscr{R}_0 \Phi \|^2_{2} + \alpha \|\Phi\|^2_{2}.
\end{equation}
We can characterize the operator $\rho^*$ as the limiting solution of \eqref{min:deterministic}.

\begin{proposition}\label{prop:solution}
There exists a unique $\Phi \in \hs$ satisfying $\mathscr{R}_{-1} = \mathscr{R}_0 \Phi$, and this is given by 
$(\mathscr{R}_0\mathscr{R}_0)^\dag \mathscr{R}_0 \mathscr{R}_{-1}$. Consequently, $\rho^*=(\mathscr{R}_0\mathscr{R}_0)^\dag \mathscr{R}_0 \mathscr{R}_{-1}$. Moreover,
\begin{equation}\label{deterministic}
\lim_{\alpha\to 0} \|\Phi_\alpha - \rho^* \|_{2} = 0.
\end{equation}
\end{proposition}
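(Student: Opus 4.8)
The plan is to recast the problem in the Hilbert space $\hs$ with inner product $\langle\cdot,\cdot\rangle_2$ and to exploit that the multiplication operator $J:\Phi\mapsto\mathscr{R}_0\Phi$ inherits injectivity from $\mathscr{R}_0$: if $\mathscr{R}_0\Phi=0$ for $\Phi\in\hs$, then $\mathscr{R}_0(\Phi f)=0$ for every $f\in\hilbert$, whence $\Phi f=0$ by $\operatorname{Ker}(\mathscr{R}_0)=\{0\}$, i.e. $\Phi=0$. First I would settle existence and uniqueness: by \eqref{inverse-problem} and the standing assumption $\rho\in\hs$ (so $\rho^*\in\hs$), the operator $\rho^*$ is a solution of $\mathscr{R}_{-1}=\mathscr{R}_0\Phi$ in $\hs$; and if $\Phi_1,\Phi_2\in\hs$ were two solutions, then $J(\Phi_1-\Phi_2)=0$, forcing $\Phi_1=\Phi_2$. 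Hence $\rho^*$ is \emph{the} unique solution.

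Next I would verify the closed form. Substituting $\mathscr{R}_{-1}=\mathscr{R}_0\rho^*$ gives $\mathscr{R}_0\mathscr{R}_{-1}=(\mathscr{R}_0\mathscr{R}_0)\rho^*$, whose range is contained in $\operatorname{Im}(\mathscr{R}_0\mathscr{R}_0)\subseteq\operatorname{dom}\big((\mathscr{R}_0\mathscr{R}_0)^\dag\big)$, so $(\mathscr{R}_0\mathscr{R}_0)^\dag\mathscr{R}_0\mathscr{R}_{-1}=(\mathscr{R}_0\mathscr{R}_0)^\dag(\mathscr{R}_0\mathscr{R}_0)\rho^*$ is well defined. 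Since $\mathscr{R}_0\mathscr{R}_0$ is self-adjoint and injective, $(\mathscr{R}_0\mathscr{R}_0)^\dag(\mathscr{R}_0\mathscr{R}_0)$ is the orthogonal projection onto $\operatorname{Ker}(\mathscr{R}_0\mathscr{R}_0)^\perp=\hilbert$, i.e. the identity on $\hilbert$; applied to $\rho^* f$ for each $f\in\hilbert$ it returns $\rho^* f$, so $(\mathscr{R}_0\mathscr{R}_0)^\dag\mathscr{R}_0\mathscr{R}_{-1}=\rho^*$. This simultaneously shows that the stated expression does define an element of $\hs$ and equals $\rho^*$.

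For \eqref{deterministic} I would substitute $\mathscr{R}_{-1}=\mathscr{R}_0\rho^*$ into $\Phi_\alpha=\kappal\mathscr{R}_{-1}=(\mathscr{R}_0\mathscr{R}_0+\alpha I)^{-1}\mathscr{R}_0\mathscr{R}_{-1}$ to get $\Phi_\alpha=(\mathscr{R}_0^2+\alpha I)^{-1}\mathscr{R}_0^2\rho^*$, hence
\begin{equation*}
\Phi_\alpha-\rho^*=\big[(\mathscr{R}_0^2+\alpha I)^{-1}\mathscr{R}_0^2-I\big]\rho^*=-\alpha\,(\mathscr{R}_0^2+\alpha I)^{-1}\rho^*.
\end{equation*}
With the spectral decomposition $\mathscr{R}_0=\sum_j\lambda_j\,\phi_j\otimes\phi_j$, $\lambda_j>0$ and $\{\phi_j\}$ a CONS (legitimate since $\mathscr{R}_0$ is compact, self-adjoint and positive definite), and writing $\rho_{ij}:=\langle\phi_i,\rho^*\phi_j\rangle_\hilbert$, a direct computation of the Hilbert--Schmidt norm gives
\begin{equation*}
\|\Phi_\alpha-\rho^*\|_2^2=\sum_{i,j}\frac{\alpha^2}{(\lambda_i^2+\alpha)^2}\,|\rho_{ij}|^2=\sum_i\frac{\alpha^2}{(\lambda_i^2+\alpha)^2}\,b_i,\qquad b_i:=\sum_j|\rho_{ij}|^2,
\end{equation*}
where $\sum_i b_i=\|\rho^*\|_2^2<\infty$. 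Since $\alpha^2/(\lambda_i^2+\alpha)^2\le1$ and, for each fixed $i$, tends to $0$ as $\alpha\to0$ (using $\lambda_i>0$), dominated convergence with dominating summable sequence $(b_i)_i$ yields $\|\Phi_\alpha-\rho^*\|_2\to0$.

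The individual steps are routine; the only point requiring care is the manipulation of the unbounded pseudo-inverse, and there the positive-definiteness of $\mathscr{R}_0$ is doing the real work — it makes $J$ injective (hence gives uniqueness), it collapses $(\mathscr{R}_0\mathscr{R}_0)^\dag(\mathscr{R}_0\mathscr{R}_0)$ to the identity on all of $\hilbert$, and it guarantees $\lambda_i>0$ so that each Tikhonov filter factor $\alpha/(\lambda_i^2+\alpha)$ vanishes in the limit. The only quantitative ingredient is the source-type condition $\rho^*\in\hs$, used exactly to make $(b_i)_i$ summable in the dominated-convergence argument.
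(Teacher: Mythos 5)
Your proof is correct. The existence and uniqueness arguments coincide with the paper's (both rest on $\operatorname{Ker}(\mathscr{R}_0)=\{0\}$ forcing $(\Phi-\rho^*)f=0$ for all $f$), and your verification of the closed form via the projection identity $(\mathscr{R}_0\mathscr{R}_0)^\dag(\mathscr{R}_0\mathscr{R}_0)=P_{\operatorname{Ker}(\mathscr{R}_0\mathscr{R}_0)^\perp}=I$ is a legitimate alternative to the paper's direct check that $\mathscr{R}_0\mathscr{R}_0^\dag\mathscr{R}_{-1}=\mathscr{R}_{-1}$; you also correctly address the domain issue for the unbounded pseudo-inverse. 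Where you genuinely diverge is in proving \eqref{deterministic}: the paper observes that $\mathscr{R}_{-1}\in\operatorname{Im}(J)\subseteq\operatorname{Dom}(J^\dag)$, identifies $\rho^*=J^\dag\mathscr{R}_{-1}$, and then cites a general convergence theorem for Tikhonov regularization (\cite[Theorem 6.2.2]{Hsing}), whereas you derive the exact bias identity $\Phi_\alpha-\rho^*=-\alpha(\mathscr{R}_0^2+\alpha I)^{-1}\rho^*$ and conclude by a spectral expansion and dominated convergence, using only $\lambda_i>0$ and $\|\rho^*\|_2<\infty$. Your route is more elementary and self-contained, and the identity you derive is essentially the same one the paper later re-derives in the proof of Theorem \ref{th:rate} to bound the deterministic bias term by $\sqrt{\alpha}M$ under the source condition; the paper's route is shorter because it offloads the limit to a cited black-box result. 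One cosmetic point: the summability of $(b_i)_i$ comes from the standing assumption $\rho\in\hs$ stated at the start of Section \ref{sec:results}, not from the source condition (Condition \ref{cond:source}), so calling it a ``source-type condition'' is slightly misleading, though it does not affect the argument.
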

The following condition is key in quantifying how fast this last limit goes to zero, i.e. the rate of convergence for the deterministic counterpart of $\hat{\Phi}_\alpha$ to $\rho^*$. It essentially entails that, for any $f \in \hilbert$, $\mathscr{R}_{-1} f$ is in the range of $\mathscr{R}_0 \mathscr{R}_0$.
\begin{condition}[Source condition]\label{cond:source}
For a given $1\le p \le \infty$, there exists $w\in \mathcal{L}(\hilbert)$ and a positive constant $M$ such that $\mathscr{R}_{-1} = \mathscr{R}_0 \mathscr{R}_0 w$, $\|w\|_{p}\le M$.
\end{condition}

\begin{remark}
The source condition can be equivalently written as a condition on $\rho^*$, i.e. $\rho^* = \mathscr{R}_0w$, $\|w\|_{p}\le M$. Moreover, the operator $w^* \mathscr{R}_0$ is well defined on the whole Hilbert space $\hilbert$ and $\rho=w^* \mathscr{R}_0$. This provides insight on the \emph{regularity} of $\rho$: since $\mathscr{R}_0 \in \mathcal{S}_1$, then
\begin{itemize}
    \item if $p=\infty$ we know that $\rho \in \mathcal{S}_1$ -- see for instance \cite[Theorem 7.8 (c)]{schatten} -- which means that it is at least as regular as $\mathscr{R}_0$;
    
\item if $1 \le p < \infty$ we have $\rho \in \mathcal{S}_{p/(p+1)}$ -- see for instance \cite[Theorem 7.8 (b)]{schatten} -- meaning that it is more regular than $\mathscr{R}_0$, but no more than half a degree of regularity (the maximal regularity gap is at $p=1$, where $\rho$ is in $\mathcal{S}_{1/2}$, compared to $\mathscr{R}_0$ being in $\mathcal{S}_1$).
\end{itemize}
{For concreteness, if the operators $\rho$ and $\mathscr{R}_0$ are simultaneously diagonalizable with eigenvalues $\{\mu_j\}$ and $\{\lambda_j\}$, respectively, then this condition translates to eigenvalue decay rates. Specifically, if $1\le p \le \infty$, we have that $\{\mu_j/\lambda_j\}\in \ell_p,$ where $\ell_p$ denotes the space of $p$-summable sequences.}
The reader can compare the source condition and these conclusions with Assumption $\textbf{A}_1$ in \cite{mas2007}.
\end{remark}

We are now able to state our main result which consists in rates of convergence for the estimator of the autocorrelation $\rho$, as a function of the rates for the estimation of $\mathscr{R}_0$ and $\mathscr{R}_1$. The convergence is expressed in terms of a general $p$-norm $\|\cdot\|_p$. Of course, the most interesting cases are $p=1,2,\infty$, corresponding to trace, Hilbert-Schmidt and operator norms, respectively.
\begin{theorem}\label{th:rate}
Suppose that Condition \ref{cond:source} holds for a given $1\le p\le \infty$. Assume also that $\hat{\mathscr{R}_0}$ and $\hat{\mathscr{R}_1}$ are estimators of $\mathscr{R}_0$ and $\mathscr{R}_1$ such that $\|\hat{\mathscr{R}_0} - \mathscr{R}_0\|^2_{p}= O_\mathbb{P}\left ( \gamma_n\right), \|\hat{\mathscr{R}_1} - \mathscr{R}_1\|^2_{p} = O_\mathbb{P}\left ( \gamma_n\right)$. Then,
\begin{equation}\label{res1}
    \|\hat{\Phi}_{\alpha_n} - \rho^*\|^2_{p} = \|\hat{\Phi}^*_{\alpha_n} - \rho\|^2_{p} = O_\mathbb{P}\left ( \frac{\gamma_n}{\alpha_n^3} + \alpha_n \right).
\end{equation}
If in addition $\alpha_n \sim \gamma_n^{1/4}$, then we have 
\begin{equation}\label{res2}
    \|\hat{\Phi}_{\alpha_n} - \rho^*\|^2_{p} =\|\hat{\Phi}^*_{\alpha_n} - \rho\|^2_{p}= O_\mathbb{P}\left ( \gamma_n^{1/4} \right).
\end{equation}
\end{theorem}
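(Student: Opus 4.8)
My plan is a bias--variance decomposition tailored to the Tikhonov scheme. Fixing $\alpha=\alpha_n$, I would write $\|\hat{\Phi}_{\alpha}-\rho^*\|_p^2 \le 2\|\hat{\Phi}_{\alpha}-\Phi_{\alpha}\|_p^2 + 2\|\Phi_{\alpha}-\rho^*\|_p^2$ and bound the deterministic term $\|\Phi_\alpha-\rho^*\|_p$ by $O(\sqrt\alpha)$ and the stochastic term $\|\hat{\Phi}_\alpha-\Phi_\alpha\|_p$ by $O_\mathbb{P}(\gamma_n^{1/2}\alpha^{-3/2})$, separately. The adjoint statement then comes for free, since $\|\cdot\|_p$ is invariant under adjunction, so $\|\hat{\Phi}^*_{\alpha}-\rho\|_p=\|\hat{\Phi}_{\alpha}-\rho^*\|_p$; the same invariance gives $\|\hat{\mathscr{R}}_{-1}-\mathscr{R}_{-1}\|_p=\|\hat{\mathscr{R}}_1-\mathscr{R}_1\|_p=O_\mathbb{P}(\gamma_n^{1/2})$. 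Throughout I would lean on the ideal inequalities $\|AB\|_p\le\|A\|_\infty\|B\|_p$ and $\|AB\|_p\le\|A\|_p\|B\|_\infty$, on $\mathscr{R}_0,\mathscr{R}_{-1}$ being trace class (so all their Schatten norms are finite), and on $\|\hat{\mathscr{R}}_0\|_\infty\le\|\mathscr{R}_0\|_\infty+\|\hat{\mathscr{R}}_0-\mathscr{R}_0\|_p=O_\mathbb{P}(1)$.

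For the bias, I would use Proposition~\ref{prop:solution} together with $\mathscr{R}_0\mathscr{R}_{-1}=\mathscr{R}_0\mathscr{R}_0\rho^*$ (from \eqref{inverse-problem}) and the explicit form $\Phi_\alpha=\kappal\mathscr{R}_{-1}=(\mathscr{R}_0\mathscr{R}_0+\alpha I)^{-1}\mathscr{R}_0\mathscr{R}_{-1}$ to reach the identity $\Phi_\alpha-\rho^*=-\alpha(\mathscr{R}_0\mathscr{R}_0+\alpha I)^{-1}\rho^*$. Substituting the source representation $\rho^*=\mathscr{R}_0 w$, $\|w\|_p\le M$, of Condition~\ref{cond:source}, this becomes $\Phi_\alpha-\rho^*=-\big[\alpha(\mathscr{R}_0\mathscr{R}_0+\alpha I)^{-1}\mathscr{R}_0\big]w$. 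Since $\mathscr{R}_0$ is self-adjoint, the functional calculus gives $\big\|\alpha(\mathscr{R}_0\mathscr{R}_0+\alpha I)^{-1}\mathscr{R}_0\big\|_\infty\le\sup_{\lambda\ge0}\frac{\alpha\lambda}{\lambda^2+\alpha}=\tfrac12\sqrt{\alpha}$, whence $\|\Phi_\alpha-\rho^*\|_p\le\tfrac12 M\sqrt{\alpha}$, i.e. $\|\Phi_{\alpha_n}-\rho^*\|_p^2=O(\alpha_n)$.

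For the variance, I would subtract the two normal equations $(\hat{\mathscr{R}}_0\hat{\mathscr{R}}_0+\alpha I)\hat{\Phi}_\alpha=\hat{\mathscr{R}}_0\hat{\mathscr{R}}_{-1}$ and $(\mathscr{R}_0\mathscr{R}_0+\alpha I)\Phi_\alpha=\mathscr{R}_0\mathscr{R}_{-1}$ to obtain
\[
\hat{\Phi}_\alpha-\Phi_\alpha=(\hat{\mathscr{R}}_0\hat{\mathscr{R}}_0+\alpha I)^{-1}\Big[\big(\hat{\mathscr{R}}_0\hat{\mathscr{R}}_{-1}-\mathscr{R}_0\mathscr{R}_{-1}\big)-\big(\hat{\mathscr{R}}_0\hat{\mathscr{R}}_0-\mathscr{R}_0\mathscr{R}_0\big)\Phi_\alpha\Big].
\]
Self-adjointness of $\hat{\mathscr{R}}_0$ makes $\hat{\mathscr{R}}_0\hat{\mathscr{R}}_0\ge0$, so $\|(\hat{\mathscr{R}}_0\hat{\mathscr{R}}_0+\alpha I)^{-1}\|_\infty\le 1/\alpha$. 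Splitting the bracket through $\hat{\mathscr{R}}_0\hat{\mathscr{R}}_{-1}-\mathscr{R}_0\mathscr{R}_{-1}=\hat{\mathscr{R}}_0(\hat{\mathscr{R}}_{-1}-\mathscr{R}_{-1})+(\hat{\mathscr{R}}_0-\mathscr{R}_0)\mathscr{R}_{-1}$ and $\hat{\mathscr{R}}_0\hat{\mathscr{R}}_0-\mathscr{R}_0\mathscr{R}_0=\hat{\mathscr{R}}_0(\hat{\mathscr{R}}_0-\mathscr{R}_0)+(\hat{\mathscr{R}}_0-\mathscr{R}_0)\mathscr{R}_0$, the first two contributions are $O_\mathbb{P}(\gamma_n^{1/2})$ in $\|\cdot\|_p$, while for the last I would use the crude bound $\|\Phi_\alpha\|_\infty\le\|\kappal\|_\infty\|\mathscr{R}_{-1}\|_\infty\le\tfrac{1}{2\sqrt{\alpha}}\|\mathscr{R}_{-1}\|_\infty=O(\alpha^{-1/2})$, so that $\|(\hat{\mathscr{R}}_0\hat{\mathscr{R}}_0-\mathscr{R}_0\mathscr{R}_0)\Phi_\alpha\|_p=O_\mathbb{P}(\gamma_n^{1/2}\alpha^{-1/2})$, which dominates. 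Hence $\|\hat{\Phi}_\alpha-\Phi_\alpha\|_p\le\alpha^{-1}O_\mathbb{P}(\gamma_n^{1/2}\alpha^{-1/2})=O_\mathbb{P}(\gamma_n^{1/2}\alpha^{-3/2})$, i.e. $\|\hat{\Phi}_{\alpha_n}-\Phi_{\alpha_n}\|_p^2=O_\mathbb{P}(\gamma_n/\alpha_n^3)$. Adding the bias gives \eqref{res1}, and choosing $\alpha_n\sim\gamma_n^{1/4}$ balances $\gamma_n/\alpha_n^3\sim\alpha_n\sim\gamma_n^{1/4}$, which is \eqref{res2}.

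The main obstacle I anticipate is the bookkeeping of the powers of $\alpha$ and of the Schatten indices in the variance step: one must be sure the two dangerous factors $\|(\hat{\mathscr{R}}_0\hat{\mathscr{R}}_0+\alpha I)^{-1}\|_\infty=O(\alpha^{-1})$ and $\|\Phi_\alpha\|_\infty=O(\alpha^{-1/2})$ combine exactly into $\alpha^{-3/2}$, and that every estimator error enters only linearly and always as a $\|\cdot\|_p$ (never as $\|\cdot\|_\infty^2$ or as a product of two error norms), so that the hypothesis $\|\hat{\mathscr{R}}_0-\mathscr{R}_0\|_p^2,\|\hat{\mathscr{R}}_1-\mathscr{R}_1\|_p^2=O_\mathbb{P}(\gamma_n)$ is all that is used. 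A lesser point is the justification of the closed forms $\hat{\Phi}_\alpha=\hatkappal\hat{\mathscr{R}}_{-1}$, $\Phi_\alpha=\kappal\mathscr{R}_{-1}$ and of the identity $\Phi_\alpha-\rho^*=-\alpha(\mathscr{R}_0\mathscr{R}_0+\alpha I)^{-1}\rho^*$, which rest on Proposition~\ref{prop:solution} and on $\mathscr{R}_0$ being positive definite, and are essentially already recorded in the discussion preceding the theorem.
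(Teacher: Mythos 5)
Your proposal is correct and follows essentially the same route as the paper: your bias term $\Phi_\alpha-\rho^*$ is the paper's third term $(\kappal-(\mathscr{R}_0\mathscr{R}_0)^\dag\mathscr{R}_0)\mathscr{R}_{-1}$, your variance term $\hat\Phi_\alpha-\Phi_\alpha$ regroups the paper's first two terms, and both arguments rest on the same key bounds $\|(\mathscr{R}_0\mathscr{R}_0+\alpha I)^{-1}\|_\infty\le 1/\alpha$, $\|\kappal\|_\infty=O(\alpha^{-1/2})$, the source condition for the $O(\sqrt{\alpha})$ bias, and a dominant $O_\mathbb{P}(\gamma_n^{1/2}\alpha^{-3/2})$ stochastic term. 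The only differences are cosmetic (subtracting normal equations versus the resolvent identity for $\hatkappal-\kappal$, and the direct identity $\Phi_\alpha-\rho^*=-\alpha(\mathscr{R}_0\mathscr{R}_0+\alpha I)^{-1}\rho^*$ in place of the Moore--Penrose manipulations).
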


Here, we considered $\hat{\mathscr{R}_0}$ and $\hat{\mathscr{R}_1}$ both having the same rate, which is usually the case. Otherwise, one can immediately derive the alternative version
$$
\|\hat{\Phi}_{\alpha_n} - \rho^*\|^2_{p} = O_\mathbb{P}\left ( \frac{\gamma_{1;n}}{\alpha_n} + \frac{\gamma_{0;n}}{\alpha_n^3} + \alpha_n \right),
$$
with $\gamma_{0;n}$ and $\gamma_{1;n}$ such that $\|\hat{\mathscr{R}_0} - \mathscr{R}_0\|^2_{p} = O_\mathbb{P}\left ( \gamma_{0;n}\right)$, $\|\hat{\mathscr{R}_1} - \mathscr{R}_1\|^2_{p} = O_\mathbb{P}\left ( \gamma_{1;n}\right)$.

 \begin{remark}Theorem \ref{th:rate} is expressed in terms of an upper bound in probability, i.e. \textit{Big-O in probability}, but we can think of other kinds of rates, depending on how $\hat{\mathscr{R}}_0$ and $\hat{\mathscr{R}}_1$ converge. Indeed if one has rates of convergence for $\hat{\mathscr{R}_0}$ and $\hat{\mathscr{R}_1}$ in a almost sure or mean-square sense, this reflects on the type of convergence of our autocorrelation estimator.
 \end{remark}
 
\begin{remark}
{Our techniques and results are intrinsically Hilbert-space based. However, we might expect that this general Hilbert structure could be exploited, for instance, to get some results (albeit of weaker nature) with Banach norms via the weak embedding approach introduced by \cite{ruiz2019}.}
\end{remark}
 
We now illustrate the use of our result in two important cases. We highlight that the second of these gives the first ever rates for autocorrelation estimation under a sparse/noisy sampling regime.
\begin{example}[Complete Observation]
In classical functional time series analysis, unbiased estimators of $\mathscr{R}_0$ and $\mathscr{R}_1$, based on a functional sample of size $n$, are given by
$$
\hat{\mathscr{R}}_0 = \frac1n \sum_{i=1}^n X_t \otimes X_t, \qquad \hat{\mathscr{R}_1} = \frac{1}{n-1} \sum_{t=1}^{n-1} X_{t+1} \otimes X_t,
$$
Under the assumption $\mathbb{E}\|X_0\|^4_\hilbert <\infty$, for these sample estimators we have a parametric rate in the mean-square sense, that is,
$$
\mathbb{E}\|\hat{\mathscr{R}_0} - \mathscr{R}_0\|^2_{2}= O(n^{-1}), \qquad \mathbb{E}\|\hat{\mathscr{R}_1} - \mathscr{R}_1\|^2_{2} = O(n^{-1}),
$$
see \cite[Theorem 4.1 and Theorem 4.7]{bosq}. Thus, $\gamma_n = n^{-1}$ and, if the source condition is satisfied for some $p\ge 2$, for $\alpha_n \sim n^{-1/4}$ we have
$$
\mathbb{E}\|\hat{\Phi}_{\alpha_n} - \rho^*\|^2_{p} = O(n^{-1/4}).
$$
Moreover, if $\|X_0\|$ is bounded, one has almost surely
$$
\|\hat{\mathscr{R}_0} - \mathscr{R}_0\|^2_{2}= O\left (\frac{\log n}{n}\right), \qquad \|\hat{\mathscr{R}_1} - \mathscr{R}_1\|^2_{2} = O\left (\frac{\log n}{n}\right), 
$$
see \cite[Corollary 4.1 and Theorem 4.8]{bosq}. Hence,
for $\alpha_n \sim (\log n/ n )^{1/4}$ we obtain
$$
\|\hat{\Phi}_{\alpha_n} - \rho^*\|^2_{p}  = O\left ( \left (\frac{\log n}{n}\right)^{1/4}\right) \quad \text{a.s.}.
$$
\end{example}

\begin{example}[Sparse/Noisy Observation] Consider the $r$-th order Sobolev space of periodic functions on $[0, 2\pi]$, say $\mathcal{H}_r([0,2\pi])$, and assume that $\{X_t, t \in \mathbb{Z}\}$ is a functional autoregressive process on $\hilbert=\mathcal{H}_r([0,2\pi])$, for some $r>1$.
When dealing with (possibly noisy) sparsely observed functional data \cite{yao:2005b}, i.e. a model of the form
$$
Y_{t,j} = X_t(U_{t,j}) + \epsilon_{t,j}, \qquad t=1,\dots,n, \ j=1,\dots,m,
$$
where $U_{ij}$ are sampling locations and $\epsilon_{ij}$ noise disturbances, one can define the estimators for $\mathscr{R}_0$ and $\mathscr{R}_1$ as the minimizer of a Tikhonov regularization problem -- see for instance \cite[Section 8.3]{Hsing}.
Under suitable assumptions, for these estimators 
\begin{equation*}
\|\hat{\mathscr{R}_0} - \mathscr{R}_0\|^2_{2}= O_\mathbb{P}\left (\gamma_n\right), \qquad \|\hat{\mathscr{R}_1} - \mathscr{R}_1\|^2_{2} = O_\mathbb{P}\left (\gamma_n\right), 
\end{equation*}
with $\gamma_n = (n m/\log n )^{-2r/(2r+1)} + n^{-1}$. Hence, if the source condition is satisfied for some $p\ge 2$, we have
$$
\|\hat{\Phi}_{\alpha_n} - \rho^*\|^2_{p}  = O_\mathbb{P}\left ( \left( \frac{\log n}{nm}\right)^\frac{r}{2(2r+1)} + n^{-1/4} \right).
$$
The result for $\hat{\mathscr{R}}_0$ and $\hat{\mathscr{R}}_1$ is established in \cite{CFSP:21}. In fact in the more general setting of stationary sequences on $\mathcal{H}_r(\mathbb{S}^d)$ -- $\mathbb{S}^d$ being the $d$-dimensional hypersphere -- it is shown that $\gamma_n =(n m/\log n )^{-2r/(2r+d)} + n^{-1}$, $r>d$. The case described above corresponds to $d=1$.
\end{example}

\section{Proofs of formal statements}

\begin{proof}[Proof of Proposition \ref{prop:solution}]
Recall that $\mathscr{R}_{-1} = \mathscr{R}_0 \rho^*$ and $\rho^* \in \hs$.
It is easy to show that $(\mathscr{R}_0\mathscr{R}_0)^\dag \mathscr{R}_0 \mathscr{R}_{-1} = \mathscr{R}_0^\dag \mathscr{R}_{-1}$ is a solution of $\mathscr{R}_{-1} = \mathscr{R}_0 \Phi, \ \Phi \in \hs$. Indeed, 
$
 \mathscr{R}_0 \mathscr{R}_0^\dag \mathscr{R}_{-1}  =\mathscr{R}_0 \mathscr{R}_0^\dag \mathscr{R}_0  \rho^* = \mathscr{R}_0  \rho^* = \mathscr{R}_{-1}.
$

We now prove that the solution is unique. Since $\mathscr{R}_{-1} = \mathscr{R}_0 \rho^*$, any possible solution $\Phi \in \hs$ satisfies
$
\mathscr{R}_0(\Phi - \rho^*)f = 0,$ for all $f \in \hilbert,
$
which means that $(\Phi - \rho^*)f \in \operatorname{Ker}(\mathscr{R}_0),$ for all $f \in \hilbert$. However, $\mathscr{R}_0$ is positive definite and hence $\operatorname{Ker}(\mathscr{R}_0)=\{0\}$. Thus, $\Phi = \rho^*$.

On the other hand, we can argue that $\mathscr{R}_{-1} \in \operatorname{Im}(J) \subseteq \operatorname{Dom}(J^\dag)$. 
This implies that $\rho^*=J^\dag \mathscr{R}_{-1}$.
Hence, from an application of \cite[Theorem 6.2.2]{Hsing}, we have the result in Equation \eqref{deterministic}.
\end{proof}

\begin{proof}[Proof of Theorem \ref{th:rate}]
Recall that $\rho^* = (\mathscr{R}_0\mathscr{R}_0)^\dag  \mathscr{R}_0 \mathscr{R}_{-1}$. Then, we can write 
\begin{equation}\label{eq:deco}
\hat{\Phi}_\alpha - \rho^* = \hatkappal (\hat{\mathscr{R}}_{-1} - \mathscr{R}_{-1}) + (\hatkappal - \kappal)\mathscr{R}_{-1} + (\kappal - (\mathscr{R}_0\mathscr{R}_0)^\dag \mathscr{R}_0) \mathscr{R}_{-1}.
\end{equation}

Now, define $\hat{\mathscr{R}}_{0; \alpha} = \hat{\mathscr{R}}_0\hat{\mathscr{R}}_0 + \alpha I$ and $\mathscr{R}_{0; \alpha} = \mathscr{R}_0\mathscr{R}_0 + \alpha I$. Thus,
\begin{align*}
    \hatkappal - \kappal &= \hat{\mathscr{R}}_{0; \alpha}^{-1} \hat{\mathscr{R}}_{0}  \pm \hat{\mathscr{R}}_{0; \alpha}^{-1} \mathscr{R}_{0}   - \mathscr{R}_{0; \alpha}^{-1} \mathscr{R}_{0} =\hat{\mathscr{R}}_{0; \alpha}^{-1}(\hat{\mathscr{R}}_{0}  - \mathscr{R}_{0}) + (\hat{\mathscr{R}}_{0; \alpha}^{-1} - \mathscr{R}_{0; \alpha}^{-1}) \mathscr{R}_{0}\\
                &=\hat{\mathscr{R}}_{0; \alpha}^{-1}(\hat{\mathscr{R}}_{0}  - \mathscr{R}_{0}) + \hat{\mathscr{R}}_{0; \alpha}^{-1}  (\mathscr{R}_{0; \alpha}  - \hat{\mathscr{R}}_{0; \alpha} ) \kappal.
\end{align*}
We first observe that, for $\alpha > 0 $ and $f \in \hilbert$,
\begin{align*}
    \alpha \|f\|_\hilbert^2 &\le \alpha \|f\|_\hilbert^2 + \|\mathscr{R}_0f\|_\hilbert^2 =  \langle f, (\mathscr{R}_0 \mathscr{R}_0 + \alpha I ) f \rangle_\hilbert \le \|f\|_\hilbert \|(\mathscr{R}_0 \mathscr{R}_0 + \alpha I ) f\|_\hilbert,
\end{align*}
and therefore
$$
\| \mathscr{R}_{0; \alpha}^{-1} \|_\infty := \sup_{\|f\|_\hilbert =1} \frac{1}{\|(\mathscr{R}_0 \mathscr{R}_0 + \alpha I ) f\|_\hilbert} \le \frac{1}{\alpha}.
$$
Equivalently, $ \| \hat{\mathscr{R}}_{0; \alpha}^{-1} \|_\infty \le 1/\alpha.
$

For the deterministic part in Equation \eqref{eq:deco}, we have
\begin{align*}
[ (\mathscr{R}_0\mathscr{R}_0)^\dag \mathscr{R}_0 - \kappal ] \mathscr{R}_{-1} &=  [ (\mathscr{R}_0\mathscr{R}_0)^\dag \mathscr{R}_0\mathscr{R}_0 - \mathscr{R}_{0; \alpha}^{-1} \mathscr{R}_0  \mathscr{R}_0 ] \rho^*\\
    &= \mathscr{R}_{0; \alpha}^{-1} [   \mathscr{R}_0\mathscr{R}_0 (\mathscr{R}_0\mathscr{R}_0)^\dag \mathscr{R}_0\mathscr{R}_0  + \alpha (\mathscr{R}_0\mathscr{R}_0)^\dag \mathscr{R}_0\mathscr{R}_0 - \mathscr{R}_0\mathscr{R}_0  ] \rho^* \\
&= \alpha\, \mathscr{R}_{0; \alpha}^{-1}  (\mathscr{R}_0\mathscr{R}_0)^\dag \mathscr{R}_0\mathscr{R}_0  \rho^* \\
&=\alpha\, \mathscr{R}_{0; \alpha}^{-1}  (\mathscr{R}_0\mathscr{R}_0)^\dag \mathscr{R}_0\mathscr{R}_0  (\mathscr{R}_0\mathscr{R}_0)^\dag \mathscr{R}_0 \mathscr{R}_{-1}\\
&=\alpha\, \mathscr{R}_{0; \alpha}^{-1}  \rho^*
\end{align*}
since $\mathscr{R}_0\mathscr{R}_0 (\mathscr{R}_0\mathscr{R}_0)^\dag \mathscr{R}_0\mathscr{R}_0 = \mathscr{R}_0\mathscr{R}_0 $ and $(\mathscr{R}_0\mathscr{R}_0)^\dag \mathscr{R}_0\mathscr{R}_0  (\mathscr{R}_0\mathscr{R}_0)^\dag = (\mathscr{R}_0\mathscr{R}_0)^\dag $.
In addition, for $z \in \hilbert$ and $f=(\mathscr{R}_{0}\mathscr{R}_{0} + \alpha I)^{-1}z$, we have
\begin{align*}
    \|\kappal^* z\|^2_{\hilbert}
    &= \langle \mathscr{R}_{0}(\mathscr{R}_{0}\mathscr{R}_{0} + \alpha I)^{-1} z, \mathscr{R}_{0} (\mathscr{R}_{0}\mathscr{R}_{0} + \alpha I)^{-1} z \rangle_{\hilbert}  = \langle f, \mathscr{R}_{0}\mathscr{R}_{0}f \rangle_{\hilbert}\\
    &\le \langle f, \mathscr{R}_{0}\mathscr{R}_{0}f \rangle_{\hilbert} +\alpha \langle f, f \rangle_{\hilbert}= \langle f, z \rangle_{\hilbert} \le \|f\|_{\hilbert} \|z\|_{\hilbert} \le \frac1\alpha \|z\|^2_{\hilbert},
\end{align*}
and hence
$
    \|\kappal\|_\infty = \|\kappal^*\|_\infty \le 1/\sqrt{\alpha}.
$
The same holds for $\hatkappal$.

Now, due to the source condition, we have $\rho^* = \mathscr{R}_0w$, $\|w\|_{p}\le M$, and
$$
\| [ (\mathscr{R}_0\mathscr{R}_0)^\dag \mathscr{R}_0 - \kappal ] \mathscr{R}_{-1} \|_{p} \le \alpha \|\kappal\|_\infty \|w\|_{p} \le \sqrt{\alpha} M.
$$
Thus,
\begin{align*}
\|\hat{\Phi}_\alpha - \rho^*\|_{p} &\le \| \hatkappal (\hat{\mathscr{R}}_{-1} - \mathscr{R}_{-1}) \|_{p} + \| (\hatkappal - \kappal)\mathscr{R}_{-1}  \|_{p} + \| (\kappal - (\mathscr{R}_0\mathscr{R}_0)^\dag \mathscr{R}_0) \mathscr{R}_{-1} \|_{p} \\
&\le \| \hatkappal  \|_\infty \|  \hat{\mathscr{R}}_1 - \mathscr{R}_1\|_{p} + \| \hat{\mathscr{R}}_{0; \alpha}^{-1}  \|_\infty \|  \hat{\mathscr{R}}_{0} - \mathscr{R}_{0}\|_{p} \|\mathscr{R}_1 \|_{p}  \\& +   \| \hat{\mathscr{R}}_{0; \alpha}^{-1} \|_\infty   \|  \hat{\mathscr{R}}_{0; \alpha} - \mathscr{R}_{0; \alpha}\|_{p}  \| \kappal \|_\infty \|\mathscr{R}_1\|_{p} + \sqrt{\alpha} M.
\end{align*}
Moreover, observe that
\begin{align*}
 \|  \hat{\mathscr{R}}_{0; \alpha} - \mathscr{R}_{0; \alpha}\|_{p} &=  \|  \hat{\mathscr{R}}_{0}\hat{\mathscr{R}}_{0} \pm \hat{\mathscr{R}}_{0}\mathscr{R}_{0} - \mathscr{R}_{0}\mathscr{R}_{0}\|_{p} \le \|  \hat{\mathscr{R}}_{0}\|_{p} \| \hat{\mathscr{R}}_{0}  - \mathscr{R}_{0}\|_{p} + \| \hat{\mathscr{R}}_{0}  - \mathscr{R}_{0}\|_{p} \|  \mathscr{R}_{0}\|_{p}\\
  &\le \| \hat{\mathscr{R}}_{0}  - \mathscr{R}_{0}\|^2_{p} + 2\| \hat{\mathscr{R}}_{0}  - \mathscr{R}_{0}\|_{p} \|  \mathscr{R}_{0}\|_{p} = O_\mathbb{P}\left ( \| \hat{\mathscr{R}}_{0}  - \mathscr{R}_{0}\|_{p} \right) 
\end{align*}

Thus, if $\|\hat{\mathscr{R}_0} - \mathscr{R}_0\|^2_{p}= O_\mathbb{P}\left ( \gamma_n\right), \|\hat{\mathscr{R}_1} - \mathscr{R}_1\|^2_{p} = O_\mathbb{P}\left ( \gamma_n\right)$, we can obtain the result in Equation \eqref{res1}.
If in addition $\alpha_n = \gamma_n^{1/4}$, then we have \eqref{res2}.
\end{proof}

\bibliographystyle{imsart-number}
\bibliography{mybiblio}
\end{document}